\newtheorem{proposition}{Proposition}
\newtheorem{problem}{Problem}
\newtheorem{lemma}{Lemma}
\newtheorem{theorem}{Theorem}
\newtheorem{corollary}{Corollary}
\theoremstyle{definition}
\newtheorem{example}{Example}
\theoremstyle{remark}
\DeclareMathOperator{\SAut}{SAut}
\DeclareMathOperator{\Lie}{Lie}
\DeclareMathOperator{\Aut}{Aut}
\DeclareMathOperator{\SL}{SL}
\DeclareMathOperator{\Sp}{Sp}
\DeclareMathOperator{\red}{red}
\DeclareMathOperator{\ses}{ss}
\DeclareMathOperator{\mult}{sem}
\DeclareMathOperator{\add}{add}
\def\Ga{{\mathbb G}_{\text{a}}}
\def\Gm{{\mathbb G}_{\text{m}}}
\def\Ru{R_{\text{u}}}
\def\KK{{\mathbb K}}
\def\AA{{\mathbb A}}
\def\gg{\mathfrak{g}}
\def\nn{\mathfrak{n}}
\def\sss{\mathfrak{s}}
\def\uu{\mathfrak{u}}
\def\rr{\mathfrak{r}}
\def\sl{\mathfrak{sl}}
\def\sp{\mathfrak{sp}}
\renewcommand{\phi}{\varphi}
\renewcommand{\ge}{\geqslant}
\begin{document}
\date{}
\title{Algebraic groups generated by semisimple elements}
\author{Ivan Arzhantsev}
\address{Faculty of Computer Science, HSE University, Pokrovsky Boulevard 11, Moscow, 109028 Russia}
\email{arjantsev@hse.ru}
\thanks{The article was prepared within the framework of the project ``International Academic Cooperation'' HSE University} 
\subjclass[2020]{Primary 20G07, 14L17; \ Secondary 17B45, 14R20}
\keywords{Linear algebraic group, unipotent element, semisimple element, one-dimensional subgroup, unipotent radical} 
\maketitle
\begin{abstract}
Given a connected linear algebraic group $G$ over an algebraically closed field of characteristic zero, we describe the subgroup of $G$ generated by all semisimple elements.
\end{abstract} 

\section{Introduction}
\label{sec1}

Let $G$ be a connected linear algebraic group over an algebraically closed field $\KK$ of characteristic zero. We recall some basic facts on the structure of the group $G$. Let $G^{\red}$ and $G^{\ses}$ be a maximal connected reductive subgroup and a maximal connected semisimple subgroup of $G$, respectively. Such subgroups are unique up to conjugation, see~\cite[Theorem~6.5]{OV}. Moreover, $G^{\red} = G^{\ses}\cdot Z$, where $Z$ is the central torus on $G^{\red}$, and $G^{\ses}\cap Z$ is finite. 

Further we use the following notation. Let $G=G_1\ltimes G_2$ be a semidirect product of subgroups $G_1$ and $G_2$, where $G_2$ is normal in~$G$. By $G_1\cdot G_2$ we denote the quotient of the semidirect product $G_1\ltimes G_2$ by a finite normal subgroup. 

Denote by $R(G)$ and $\Ru(G)$ the radical and the unipotent radical of $G$, respectively. By definition, $R(G)$ is the maximal connected normal solvable subgroup of $G$, and $\Ru(G)$ is the maximal normal unipotent subgroup of $G$. We have decompositions
\begin{equation} \label{ttt}
G=G^{\ses}\cdot R(G) \quad \text{and} \quad G=G^{\red}\ltimes\Ru(G);
 \end{equation}
see~\cite[Theorem~6.4]{OV}; the original proof of the second decomposition is given in~\cite{Mo} over any field of characteristic zero. 

Any family $\{X_i, i\in I\}$ of irreducible subvarieties in $G$ passing through the unit generates a closed connected subgroup $H$ in $G$; see~\cite[Theorem~3.1.4]{OV}. If the family is stable under conjugation, then the subgroup $H$ is normal. 

Any unipotent element in $G$ is contained in a one-dimensional unipotent subgroup~\cite[Theorem~3.2.1]{OV} and any semisimple element is contained in a subtorus~\cite[Theorem~22.2]{Hum}. These results and the Jordan decomposition imply that $G$ is generated by one-dimensional subgroups. 

It is well known that any one-dimensional connected linear algebraic group is isomorphic either to the additive group $\Ga=(\KK,+)$ or to the multiplicative group $\Gm=(\KK\setminus\{0\},\times)$ of the ground field~\cite[Theorem~20.5]{Hum}. A \emph{$\Ga$-subgroup} (resp. a \emph{$\Gm$-subgroup}) in $G$  is a closed subgroup isomorphic to $\Ga$ (resp. to $\Gm$). 

Let us say that $G$ is \emph{additively generated} if $G$ is generated by its $\Ga$-subgroups. It is proved in~\cite[Lemma~1.1]{Po} that the following conditions on a connected linear algebraic group $G$ are equivalent:
\begin{enumerate}
\item[(I)]
$G$ is additively generated;
\item[(II)]
$G$ is generated by unipotent elements;
\item[(III)]
$G$ has no nontrivial characters;
\item [(IV)]
$R(G)=\Ru(G)$.
\end{enumerate}
If we denote by $G^{\add}$ the subgroup generated by all $\Ga$-subgroups of a linear algebraic group~$G$, then $G^{\add}$ is a closed normal subgroup in $G$ with 
$G^{\add}=G^{\ses}\ltimes\Ru(G)$. Moreover, we have $G=Z\cdot G^{\add}$, where $Z$ is the central torus in $G^{\red}$. 

In~\cite{Po}, these observations were applied to compute the Makar-Limanov invariant of an affine variety. Also they are useful in the study of so-called flexible varieties. Namely, let $X$ be an affine algebraic variety and $\SAut(X)$ be the subgroup of the automorphism group $\Aut(X)$ generated by all $\Ga$-subgroups. Clearly, if an additively generated linear algebraic group $G$ acts on $X$, then the image of $G$ in $\Aut(X)$ is contained in $\SAut(X)$. In particular, if $G$ acts on $X$ with an open orbit, then $\SAut(X)$ acts on $X$ with an open orbit, and by~\cite{AFKKZ} the action of $\SAut(X)$ on its open orbit is infinitely transitive. 

Let us say that a connected linear algebraic group $G$ is \emph{multiplicatively generated}, if $G$ is generated by all its $\Gm$-subgroups. The aim of this note is to characterize multiplicatively generated groups and to describe the maximal multiplicatively generated subgroup $G^{\mult}$ in a connected linear algebraic group $G$. It is well known that any reductive group is multiplicatively generated, and we are interested in what happens beyond the reductive case for being generated by semisimple elements. In particular, we obtain an effective description of the characteristic subgroup $G^{\mult}$ in a connected linear algebraic group~$G$. The corresponding results are formulated in the next section. 

A natural problem is to describe the subgroup $G^{\mult}$ in a connected linear algebraic group over an arbitrary field, in particular, over an algebraically closed field of positive characteristic. In this note, we make significant use of facts and techniques that are specific to algebraically closed fields and/or fields of characteristic zero. For example, they include  decompositions~(\ref{ttt}), the Lie algebras techniques, the exponential map, and full reducibility of representations of reductive groups. So it is hardly possible to directly transfer the results of this note into a positive characteristic. We expect new ideas and approaches that will allow to deal with the case of an arbitrary ground field. 

\section{Main results}
\label{sec2}
It is easy to see that a connected linear algebraic group $G$ consists of semisimple elements if and only if $G$ is a torus. The next proposition shows that the class of connected linear 
algebraic groups generated by semisimple elements is much wider. 

\begin{proposition} \label{propim}
Let $G$ be a connected linear algebraic group. Then the following conditions are equivalent:
\begin{enumerate}
\item
$G$ is multiplicatively generated;
\item
$G$ is generated by semisimple elements;
\item
any homomorpism $G\to\Ga$ is trivial;
\item 
$G$ has no proper normal subgroup containing $G^{\red}$;
\item 
the derived subgroup $[G,G]$ equals $G^{ss}\ltimes\Ru(G)$.
\end{enumerate}
\end{proposition}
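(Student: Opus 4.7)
The plan is to establish the cycle $(1)\Leftrightarrow (2)\Rightarrow (3)\Rightarrow (5)\Rightarrow (4)\Rightarrow (2)$. The equivalence $(1)\Leftrightarrow (2)$ is immediate from Section~\ref{sec1}: every semisimple element lies in a subtorus, which decomposes as a product of $\Gm$-subgroups, while every element of a $\Gm$-subgroup is semisimple. For $(2)\Rightarrow (3)$, note that $\Ga$ is unipotent and contains no nontrivial semisimple element, so any homomorphism $G\to\Ga$ kills every semisimple element of $G$ and hence, by~(2), the whole group.

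For $(3)\Rightarrow (5)$, the inclusion $[G,G]\subseteq G^{\ses}\rightthreetimes\Ru(G)$ is automatic, since $G/(G^{\ses}\rightthreetimes\Ru(G))\cong Z/(Z\cap G^{\ses})$ is a torus and hence abelian. For the opposite inclusion, the connected commutative group $G/[G,G]$ splits as the product of a torus and a vector group; by~(3), the vector part admits no nontrivial map to $\Ga$ and must be trivial, so $G/[G,G]$ is itself a torus. Being a torus quotient of $G$, it factors through the maximal one $Z/(Z\cap G^{\ses})$, whose kernel equals $G^{\ses}\rightthreetimes\Ru(G)$, forcing $[G,G]\supseteq G^{\ses}\rightthreetimes\Ru(G)$. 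For $(5)\Rightarrow (4)$, let $N$ be a normal subgroup of $G$ containing $G^{\red}$. The Levi decomposition yields $G/N\cong\Ru(G)/(N\cap\Ru(G))$, a connected unipotent group. By~(5) the image of $[G,G]=G^{\ses}\rightthreetimes\Ru(G)$ in $G/N$ fills the whole quotient, since $G^{\ses}\subseteq N$ while $\Ru(G)$ surjects onto $G/N$; so $G/N$ equals its own derived subgroup. A nontrivial connected unipotent group is nilpotent and cannot satisfy this, whence $N=G$.

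Finally, for $(4)\Rightarrow (2)$, let $H$ be the subgroup of $G$ generated by all $\Gm$-subgroups. Since these are irreducible subvarieties through the unit and the family is stable under conjugation, the generation theorem from Section~\ref{sec1} makes $H$ closed, connected, and normal. The inclusion $G^{\red}\subseteq H$ follows from the density of semisimple elements in the connected reductive group $G^{\red}$, visible from the fact that the conjugation map $G^{\red}\times T\to G^{\red}$, $(g,t)\mapsto gtg^{-1}$, with $T$ a maximal torus, has image of dimension $\dim G^{\red}$. Condition~(4) then forces $H=G$, so $G$ is generated by $\Gm$-subgroups and equivalently by semisimple elements. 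The most delicate step is $(3)\Rightarrow (5)$, where controlling how $\Ru(G)$ embeds into the abelianization requires both the commutative-group splitting and the assumption on homomorphisms to $\Ga$.
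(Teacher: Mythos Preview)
Your proof is correct and is organized somewhat differently from the paper's. The paper proves $(1)\Leftrightarrow(2)$, then links the remaining conditions through separate strands $(1)\Rightarrow(4)\Rightarrow(2)$, $(1)\Rightarrow(3)\Rightarrow(1)$, and $(3)\Leftrightarrow(5)$, whereas you run a single cycle $(2)\Rightarrow(3)\Rightarrow(5)\Rightarrow(4)\Rightarrow(2)$. Your step $(5)\Rightarrow(4)$, via the observation that a nontrivial unipotent quotient cannot be perfect, does not appear in the paper; conversely, the paper's direct $(3)\Rightarrow(1)$, arguing that $G/G^{\mult}$ is unipotent and hence surjects onto $\Ga$, bypasses the detour through $(5)$. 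Both arguments ultimately rest on the density of semisimple elements in a connected reductive group for the step $(4)\Rightarrow(2)$; the paper isolates this as a separate lemma stating that every element of $G^{\red}$ is a product of at most two semisimple elements, which is slightly sharper than the bare density statement you invoke, but the extra strength is not needed once one knows that $G^{\mult}$ is closed.
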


Now we come to a description of the subgroup $G^{\mult}$ generated by all semisimple elements in a connected linear algebraic group $G$. It is convenient to switch to Lie algebras of algebraic groups in question. Denote by $\gg=\Lie(G)$ the tangent algebra of a linear algebraic group~$G$. The nilpotent ideal $\nn=\Lie(\Ru(G))$ of $\gg$ is a $G^{\red}$-module with respect to the adjoint action. Denote by $\nn_1$ the sum of all non-trivial simple $G^{\red}$-submodules in $\nn$. Let $\sss$ be the subalgebra in $\nn$ generated by $\nn_1$ and $U=\exp(\sss)\subseteq\Ru(G)$ be the corresponding unipotent subgroup.  

\begin{theorem} \label{tmain}
Let $G$ be a connected linear algebraic group and $G^{\mult}$ be the subgroup of $G$ generated by all semisimple elements. Then
$$
G^{\mult}=G^{\red}\ltimes U.
$$
\end{theorem}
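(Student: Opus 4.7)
My plan is to prove the two inclusions $G^{\red}\rightthreetimes U \subseteq G^{\mult}$ and $G^{\mult} \subseteq G^{\red}\rightthreetimes U$ separately. A preliminary point, which I also expect to be the main technical obstacle, is that $\sss$ is not just a subalgebra but an ideal of $\nn$, so that $U$ is normal in $G$ and the semidirect product $G^{\red}\rightthreetimes U$ makes sense as a subgroup. Using complete reducibility of the reductive group $G^{\red}$, write $\nn = \nn_1 \oplus \nn_0$ with $\nn_0 = \nn^{G^{\red}}$; for $w\in\nn_0$ the operator $\ad(w)$ on $\nn$ is $G^{\red}$-equivariant, so it sends the non-trivial isotypic component into itself, giving $[\nn_0,\nn_1]\subseteq\nn_1$. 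A short induction on bracket depth using the Jacobi identity then upgrades this to $[\nn,\sss]\subseteq\sss$, so $U=\exp(\sss)$ is normalized by $\Ru(G)$ in addition to $G^{\red}$, hence is normal in $G$. As a byproduct, complete reducibility gives $\sss = \nn_1 \oplus (\sss\cap\nn_0)$ as $G^{\red}$-modules.

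For the inclusion $G^{\red}\rightthreetimes U \subseteq G^{\mult}$, I would apply Proposition \ref{propim} to the closed connected subgroup $H := G^{\red}\rightthreetimes U$, whose Levi decomposition has $H^{\red} = G^{\red}$ and $\Ru(H) = U$. It suffices to verify condition (3): any homomorphism $\phi\colon H \to \Ga$ is trivial. Such a $\phi$ is automatically trivial on the reductive group $G^{\red}$, and its restriction to $U$ is $G^{\red}$-invariant by normality, so the differential $d\phi$ is a $G^{\red}$-invariant linear functional on $\sss/[\sss,\sss]$. But $\sss = \nn_1 + [\sss,\sss]$, so $\sss/[\sss,\sss]$ is a quotient of $\nn_1$; since $\nn_1$ is a direct sum of non-trivial simple $G^{\red}$-submodules, every such quotient has zero $G^{\red}$-fixed part. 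Hence $\phi$ is trivial and, by Proposition \ref{propim}, $H$ is multiplicatively generated. Its semisimple elements stay semisimple in $G$, giving $H \subseteq G^{\mult}$.

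For the reverse inclusion I would pass to the quotient $G/U$. From $\sss = \nn_1 \oplus (\sss\cap\nn_0)$ one obtains $\nn/\sss \cong \nn_0/(\sss\cap\nn_0)$, which carries trivial $G^{\red}$-action. So $G^{\red}$ acts trivially on $\Ru(G)/U$ by conjugation, and $G/U$ is the direct product $G^{\red}\times(\Ru(G)/U)$ of a reductive group and a unipotent one. In such a product a semisimple element must have trivial component in the unipotent factor (the only semisimple element of a unipotent group being the identity). Therefore the image in $G/U$ of any semisimple $s \in G$ lies in $G^{\red}\times\{1\}$, so $s$ itself lies in the preimage $G^{\red}\cdot U = G^{\red}\rightthreetimes U$. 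This gives $G^{\mult} \subseteq G^{\red}\rightthreetimes U$, completing the proof.
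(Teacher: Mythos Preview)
Your argument is correct and takes a genuinely different route from the paper, although both hinge on the same technical point that $\sss$ is an ideal. The paper proves $\rr\oplus\sss$ is an ideal by the same $G^{\red}$-equivariance of $\ad(w)$ for $w\in\nn_0$ that you use, but then argues via a \emph{normal closure} characterization: since every $\Gm$-subgroup lies in a conjugate of $G^{\red}$, the group $G^{\mult}$ is precisely the smallest normal subgroup containing $G^{\red}$, so $\Lie(G^{\mult})$ is the smallest ideal containing $\rr$, which is shown to be $\rr\oplus\sss$. For the inclusion $\nn_1\subseteq\Lie(G^{\mult})$ the paper does an explicit one-parameter-subgroup calculation, writing $tc(a)t^{-1}c(-a)$ as a product of two elements in conjugates of $G^{\red}$. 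By contrast, you bypass the explicit calculation by feeding $H=G^{\red}\rightthreetimes U$ back into Proposition~\ref{propim} and checking condition~(3) via the observation that $\sss/[\sss,\sss]$ is a quotient of $\nn_1$; and for the reverse inclusion you replace the minimal-ideal argument by the cleaner statement that $G/U\cong G^{\red}\times(\Ru(G)/U)$ is a direct product, so semisimple elements project into the reductive factor. Your approach makes more systematic use of Proposition~\ref{propim} as a black box and is perhaps more conceptual; the paper's approach yields as a byproduct the useful description of $G^{\mult}$ as the normal closure of $G^{\red}$ in $G$.
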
 

Let us recall that a subgroup $G$ of a reductive group $F$ is \emph{regular} if $G$ contains a maximal torus $T$ of $F$. 

\begin{corollary} \label{cor1}
Let $G$ be a regular subgroup of a reductive group $F$. Then $G$ is generated by semisimple elements. In particular, any parabolic subgroup of a reductive group is generated by semisimple elements.
\end{corollary}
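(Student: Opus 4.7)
The plan is to apply Theorem~\ref{tmain} and show that under the regularity hypothesis the unipotent subgroup $U$ appearing there actually coincides with the full unipotent radical $\Ru(G)$. In symbols, writing $\gg = \Lie(G)$ and $\nn = \Lie(\Ru(G))$, I must verify that the $G^{\red}$-module $\nn$ has no nonzero trivial submodule, so that $\nn_1 = \nn$, hence $\sss = \nn$ and $U = \Ru(G)$. Combined with Theorem~\ref{tmain}, this yields $G^{\mult} = G^{\red} \rightthreetimes \Ru(G) = G$.

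The first step is to place a maximal torus $T$ of $F$ inside $G^{\red}$. Since $T$ is a reductive subgroup of $G$, by the conjugacy of maximal reductive subgroups (cited in the introduction from \cite[Theorem~6.5]{OV}), one may replace $G^{\red}$ by a conjugate and assume $T \subseteq G^{\red}$. The second step is to compute the space of $T$-fixed vectors in $\nn$. Since $F$ is reductive and $T$ is a maximal torus of $F$, the centralizer of $T$ in $\Lie(F)$ equals $\Lie(T)$; therefore
\[
\nn^{T} \subseteq \Lie(F)^{T} \cap \nn = \Lie(T) \cap \nn = 0,
\]
the last equality because $\Lie(T)$ consists of semisimple elements and $\nn$ of nilpotent ones.

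The third step converts this into a statement about $G^{\red}$-modules: any trivial simple $G^{\red}$-submodule of $\nn$ is in particular $T$-fixed, so the vanishing $\nn^{T}=0$ forces every simple $G^{\red}$-submodule of $\nn$ to be nontrivial. Hence $\nn_1 = \nn$, the subalgebra $\sss$ generated by $\nn_1$ is all of $\nn$, and $U = \exp(\sss) = \Ru(G)$. Theorem~\ref{tmain} then gives $G^{\mult} = G$, proving the first assertion.

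For the second assertion, recall that any parabolic subgroup $P$ of a reductive group $F$ contains a Borel subgroup of $F$, and any Borel subgroup contains a maximal torus of $F$; thus $P$ is regular and the previous paragraph applies. The only mildly technical point is the reduction $T \subseteq G^{\red}$ via conjugation: all other steps are immediate once one recognises that reductivity of $F$ pins down the centralizer of $T$ in $\Lie(F)$.
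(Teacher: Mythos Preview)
Your proof is correct and follows essentially the same route as the paper's: both arguments reduce to showing that $\nn$ has no nonzero $T$-fixed vectors, so that $\nn_1=\nn=\sss$ and hence $G^{\mult}=G$. The paper phrases this as ``$\nn$ is a span of root vectors in $\Lie(F)$, all of nonzero $T$-weight'' and appeals to Lemma~\ref{l2}, whereas you compute $\nn^{T}\subseteq\Lie(T)\cap\nn=0$ directly and make the preliminary conjugation $T\subseteq G^{\red}$ explicit; these are the same observation in different clothing.
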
 

\begin{example}
Consider a solvable linear algebraic group
$$
G=\left\{\left(
 \begin{array}{cccc}
 1 & a & d & b\\
 0 & t & 0 & c\\
 0 & 0 & 1 & e\\
 0 & 0 & 0 & 1 \\
 \end{array}
 \right);  \ t\in\KK\setminus\{0\}, \ a,b,c,d,e \in \KK \right\}.
$$
The subgroup $G^{\mult}$ is
$$
G^{\mult}=\left\{\left(
 \begin{array}{cccc}
 1 & a & 0 & b\\
 0 & t & 0 & c\\
 0 & 0 & 1 & 0\\
 0 & 0 & 0 & 1 \\
 \end{array}
 \right);  \ t\in\KK\setminus\{0\}, \ a,b,c \in \KK \right\}.
$$
In this case we can not find a unipotent subgroup of $G$ of complementary dimension that has trivial intersection with $G^{\mult}$.
Notice also that the center of $G^{\mult}$ is one-dimensional and unipotent. 
\end{example}

\section{Proofs}
\label{sec3}

\begin{proof}[Proof of Proposition~\ref{propim}]
Any element of a $\Gm$-subgroup in $G$ is semisimple, and any semisimple element is contained in a subtorus ~\cite[Theorem~22.2]{Hum}, so it is a product of elements from $\Gm$-subgroups. This proves the equivalence of $(1)$ and $(2)$. 

Any $\Gm$-subgroup is reductive, so it is contained in a maximal reductive sugbroup, i.e., in a subgroup conjugated to $G^{\red}$. This gives the implication $(1)\Rightarrow (4)$. The implication $(4)\Rightarrow (2)$ follows from the following lemma. 

\begin{lemma} \label{lemred}
Let $F$ be a connected reductive group. Then any element of $F$ is a product of at most two semisimple elements. 
\end{lemma}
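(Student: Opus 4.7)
The plan is to reduce via the Jordan decomposition to the case of a unipotent element in a reductive group, and then to embed that element into an $\SL_2$-subgroup via Jacobson--Morozov to perform an explicit factorization. Given $g\in F$, I would write $g=g_s g_u=g_u g_s$ for its Jordan decomposition and pass to $Z=Z_F(g_s)^\circ$, which is connected reductive by Steinberg's theorem, contains $g_u$ (because $g_u$ lies in a connected one-parameter unipotent subgroup that commutes with $g_s$), and whose every element commutes with $g_s$ by definition. If I can write $g_u=s_1 s_2$ for some semisimple $s_1,s_2\in Z$, then $g_s$ and $s_1$ are commuting semisimple elements, so their product $g_s s_1$ is again semisimple, and
\[
g=(g_s s_1)\cdot s_2
\]
is the desired factorization into two semisimple elements. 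Thus it suffices to treat the case of a unipotent element $u$ in a connected reductive group $H$.

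For a nontrivial such $u$, I would invoke the Jacobson--Morozov theorem: the nilpotent element $X=\log u\in\Lie(H)$ extends to an $\sl_2$-triple, and because $\SL_2$ is simply connected this integrates to an algebraic homomorphism $\phi\colon\SL_2\to H$ with
\[
\phi\begin{pmatrix}1 & 1\\ 0 & 1\end{pmatrix}=u.
\]
Since algebraic homomorphisms in characteristic zero preserve the Jordan decomposition, the image of a semisimple element is semisimple, so it is enough to factor the standard unipotent inside $\SL_2$ itself. This is achieved by the identity
\[
\begin{pmatrix}1 & 1\\ 0 & 1\end{pmatrix}=\begin{pmatrix}0 & 1\\ -1 & 0\end{pmatrix}\begin{pmatrix}0 & -1\\ 1 & 1\end{pmatrix},
\]
whose right-hand factors have traces $0$ and $1$ respectively, therefore distinct eigenvalues, so both are semisimple in $\SL_2$.

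The only nonelementary input is Steinberg's reductivity theorem for $Z_F(g_s)^\circ$; granted this, the remainder is a short Jordan-decomposition argument together with the $\SL_2$ identity above. The step that most requires attention is verifying that $g_u$ really lies in the identity component $Z_F(g_s)^\circ$ rather than merely in $Z_F(g_s)$, which follows from the fact that every unipotent element sits in a connected $\Ga$-subgroup which, if the element commutes with $g_s$, is forced to lie inside $Z_F(g_s)^\circ$.
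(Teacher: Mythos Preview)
Your proof is correct, but it takes a genuinely different route from the paper's. The paper's argument is purely topological: in a connected reductive group the regular semisimple elements form a nonempty open subset $V$ (Humphreys, Theorem~22.2), and for any dense open $V$ in a connected algebraic group one has $V\cdot V=F$ because $gV^{-1}\cap V\ne\varnothing$ for every $g$ (Humphreys, Lemma~7.4). That is the entire proof---two lines, no structure theory beyond the density of the regular semisimple locus.

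Your approach instead goes through the Jordan decomposition, Steinberg's theorem on centralizers of semisimple elements, Jacobson--Morozov, and an explicit $\SL_2$ identity. All of the steps are sound (in particular your verification that $g_u\in Z_F(g_s)^\circ$ via the one-parameter subgroup $\exp(t\log g_u)$ is the right way to handle that point, and the two $\SL_2$ factors do have distinct eigenvalues). What you gain is an essentially constructive factorization: given $g$, your recipe produces the two semisimple factors explicitly, whereas the density argument is nonconstructive. What you pay is considerably heavier machinery for a statement that, in the paper's treatment, is really just an instance of ``dense open times dense open is everything.''
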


\begin{proof}
The set of all semisimple elements in $F$ contains a nonempty subset $V$ that is open in $F$~\cite[Theorem~22.2]{Hum}. By~\cite[Lemma~7.4]{Hum}, the subset $V\cdot V$ coincides with $F$. 
\end{proof}

Since elements of finite order are dense in $\Gm$ and any non-unit element of $\Ga$ has infinite order, any homomorphism from $\Gm$ to $\Ga$ is trivial. So we obtain $(1)\Rightarrow (3)$. 

To prove $(3)\Rightarrow (1)$, assume that the normal subgroup $G^{\mult}$ is a proper subgroup of~$G$. Then the factorgroup $W:=G/G^{\mult}$ is a linear algebraic group and any element in $W$ is the image of a unipotent element in $G$ under the projection $G\to W$. So all elements in $W$ are unipotent, and $W$ is a unipotent linear algebraic group. Since $W$ is solvable, it admits a surjective homomorphism to a commutative unipotent group of positive dimension. A~commutative unipotent group is the direct product of $\Ga$-subgroups. This proves that there is a surjective homomorphism $W\to\Ga$. The composition $G\to W\to\Ga$ is the desired surjection. 

Let us prove that $(3)$ is equivalent to $(5)$. Since $[G^{\red},G^{\red}]=G^{\ses}$, we conclude that $[G,G]=G^{\ses}\ltimes L$, where $L\subseteq\Ru(G)$ is a normal subgroup. Since the group $\Ga$ is commutative, any homomorphism $G\to\Ga$ can be decomposed as $G\to G/[G,G]\to \Ga$. Since $G/[G,G]$ is a connected commutative 
linear algebraic group, it is isomorphic to the direct product of a torus and a commutative unipotent group~\cite[Theorem~3.2.8]{OV}. So $(3)$ is equivalent to the condition that $G/[G,G]$ contains no non-unit unipotent element. This is equivalent to $L=\Ru(G)$, and so to $(5)$. 

\smallskip

This completes the proof of Proposition~\ref{propim}.
\end{proof}

\begin{proof}[Proof of Theorem~\ref{tmain}.]
By Lemma~\ref{lemred}, the subgroup $G^{\red}$ is contained in $G^{\mult}$, and the decomposition $G=G^{\red}\ltimes \Ru(G)$ implies 
$G^{\mult}=G^{\red}\ltimes U$ for some subgroup $U$ in~$\Ru(G)$. 

\smallskip
Let $T$ be a maximal torus in $G^{\red}$ and $\uu=\Lie(U)$. 

\begin{lemma} \label{l2}
If $v$ is a $T$-semiinvariant vector in $\nn$ of nonzero weight, then $v\in\uu$.
\end{lemma}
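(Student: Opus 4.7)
The plan is to exploit complete reducibility of $\nn$ as a $G^{\red}$-module and then read off the $T$-weight decomposition. Since $G^{\red}$ is reductive and $\mathrm{char}\,\KK = 0$, the adjoint action of $G^{\red}$ on the finite-dimensional space $\nn$ is completely reducible. Thus $\nn$ splits as a direct sum of simple $G^{\red}$-submodules; grouping the trivial simple submodules into a single summand $\nn_0 = \nn^{G^{\red}}$ and using the paper's definition of $\nn_1$, one obtains a $G^{\red}$-invariant direct sum decomposition $\nn = \nn_0 \oplus \nn_1$.

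Since $T \subseteq G^{\red}$, this decomposition is automatically $T$-stable, and $T$ acts trivially on $\nn_0$ because $\nn_0$ is fixed pointwise by $G^{\red}$. Consequently $\nn_0$ contributes only the zero weight to the $T$-weight decomposition of $\nn$, and every $T$-weight space for a nonzero character $\chi$ of $T$ is contained in $\nn_1$.

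Finally, by construction $\uu = \sss$ is the Lie subalgebra of $\nn$ generated by $\nn_1$, so in particular $\nn_1 \subseteq \uu$. Hence any $T$-semiinvariant $v \in \nn$ of nonzero weight lies in $\nn_1 \subseteq \uu$, as required. The only nontrivial ingredient is the standard complete reducibility theorem for reductive group actions in characteristic zero, so no serious obstacle is expected; the one point that needs to be flagged is that triviality of the $G^{\red}$-action on $\nn_0$ implies triviality of the restricted $T$-action, which forces any $T$-semiinvariant of nonzero weight to have zero component in $\nn_0$.
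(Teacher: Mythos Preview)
Your argument hinges on the line ``by construction $\uu=\sss$'', but that is not what $\uu$ means here. Lemma~\ref{l2} is stated inside the proof of Theorem~\ref{tmain}, just after the author has recycled the letter $U$: he writes $G^{\mult}=G^{\red}\rightthreetimes U$ \emph{for some} subgroup $U\subseteq\Ru(G)$ and then sets $\uu=\Lie(U)$. So at this point $\uu$ denotes the Lie algebra of the (a priori unknown) unipotent part of $G^{\mult}$, and proving that this $\uu$ coincides with $\sss$ is precisely the content of Theorem~\ref{tmain}. Taking $\uu=\sss$ as given makes the argument circular.

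What you have actually shown is that every nonzero $T$-weight vector in $\nn$ lies in $\nn_1$, hence in $\sss$. That is correct, and it amounts to the observation used in Lemma~\ref{l3}; but it says nothing about $G^{\mult}$. The paper's proof of Lemma~\ref{l2} is a genuine group computation: if $C=\{c(a):a\in\KK\}$ is the one-parameter unipotent subgroup tangent to $v$, then
\[
c(\chi(t)a-a)=t\cdot\bigl(c(a)\,t^{-1}\,c(a)^{-1}\bigr)
\]
is a product of an element of $T\subseteq G^{\red}$ and an element of the conjugate $c(a)\,G^{\red}\,c(a)^{-1}$, both of which lie in $G^{\mult}$ since $G^{\mult}$ is normal and contains $G^{\red}$. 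As $\chi\ne 0$, these elements sweep out all of $C$, so $C\subseteq G^{\mult}\cap\Ru(G)=U$ and therefore $v\in\uu$. This link to $G^{\mult}$ is the step missing from your proposal.
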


\begin{proof}
Let $C$ be the one-parameter subgroup in $\Ru(G)$ with the tangent vector $v$. Fix an isomorphism $\Ga\to C, \, a\mapsto c(a)$. We have 
$$
tc(a)t^{-1}=c(\chi(t)a)
$$ 
for all $t\in T$, $a\in\KK$ and some nonzero weight $\chi$ of the torus $T$. 
By Proposition~\ref{propim}, the subgroup $G^{\mult}$ contains all subgroups conjugated to $G^{\red}$. So the element
$$
c(\chi(t)a-a)=(tc(a)t^{-1})c(-a)=t(c(a)t^{-1}c(a)^{-1})
$$
is contained in $G^{\red}$ for all $t\in T$ and $a\in\KK$. This proves that the subgroup $C$ is contained in $U$ and its tangent vector $v$ is in $\uu$. 
\end{proof}

\begin{lemma} \label{l3}
The subspace $\nn_1$ is contained in $\uu$.
\end{lemma}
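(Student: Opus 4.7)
The plan is to establish the inclusion one simple submodule at a time: every non-trivial simple $G^{\red}$-submodule $V \subseteq \nn$ will be shown to lie in $\uu$, which gives the lemma since $\nn_1$ is by definition the sum of all such submodules. Two ingredients drive the argument: the subspace $\uu$ is stable under the adjoint action of $G^{\red}$, and every such $V$ carries a $T$-weight vector of nonzero weight, so that Lemma~\ref{l2} applies.

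First I would verify that $\uu$ is $\ad(G^{\red})$-stable. The set of semisimple elements of $G$ is closed under conjugation, hence the subgroup $G^{\mult}$ that they generate is normal in $G$. Its unipotent radical $U$ is characteristic in $G^{\mult}$, and so it is normal in $G$ as well; consequently $\uu = \Lie(U)$ is stable under the adjoint action of all of $G$, and in particular of $G^{\red}$.

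Next I would argue that any non-trivial simple $G^{\red}$-submodule $V \subseteq \nn$ admits a $T$-weight vector of nonzero weight. If on the contrary $T$ acted trivially on $V$, then so would every conjugate of $T$ in $G^{\red}$. The union of these conjugates exhausts the set of semisimple elements of $G^{\red}$ (each semisimple element lies in some maximal torus, and all maximal tori are conjugate), and by Lemma~\ref{lemred} those elements generate $G^{\red}$. This would force $G^{\red}$ to act trivially on $V$, contradicting non-triviality. Picking a weight vector $v \in V$ of nonzero weight and applying Lemma~\ref{l2} yields $v \in \uu$; then $V \cap \uu$ is a nonzero $G^{\red}$-submodule of the simple module $V$, so $V \subseteq \uu$. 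Summing over all non-trivial simple submodules gives $\nn_1 \subseteq \uu$.

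The only step requiring a moment's thought is the existence of a nonzero $T$-weight on any non-trivial simple $G^{\red}$-module; once that is in hand, Lemmas~\ref{lemred} and~\ref{l2} combine to give the conclusion essentially for free, and no serious technical obstacle is expected.
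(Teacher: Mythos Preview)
Your proof is correct and follows essentially the same structure as the paper's: take a non-trivial simple $G^{\red}$-submodule $V\subseteq\nn$, find a $T$-weight vector of nonzero weight, apply Lemma~\ref{l2} to place it in $\uu$, and then use the $G^{\red}$-invariance of $\uu$ to conclude $V\subseteq\uu$. The only difference is that the paper obtains the nonzero-weight vector directly as the highest weight vector of $V$ (with respect to a Borel containing $T$), whereas you argue by contradiction via Lemma~\ref{lemred}; your route avoids appealing to highest weight theory at the cost of a slightly longer justification, and you also make explicit the $G^{\red}$-invariance of $\uu$ that the paper simply asserts.
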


\begin{proof}
Any simple $G^{\red}$-submodule $V$ in $\nn$ contains a highest weight vector $v$ with respect to some Borel subgroup $B$ in $G^{\red}$ containing the maximal torus $T$. If the module $V$ is non-trivial, its highest weight is nonzero. By Lemma~\ref{l2}, the vector $v$ is in $\uu$. Since $\uu$ is $G^{\red}$-invariant, we have $V\subseteq\uu$. This proves that $\nn_1\subseteq\uu$. 
\end{proof}

Lemma~\ref{l3} implies $\sss\subseteq\uu$ and so $G^{\red}\ltimes \exp(\sss)\subseteq G^{\mult}$.  

\begin{lemma} \label{l4}
Let $\rr=\Lie(G^{\red})$. Then the subalgebra $\rr\oplus\sss$ is an ideal in $\gg$. 
\end{lemma}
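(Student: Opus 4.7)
The plan is to decompose $\gg = \rr \oplus \nn$ as a vector space and verify that $\rr \oplus \sss$ is closed under bracket with elements from each summand. Throughout, I would use that $G^{\red}$ is reductive, so $\nn$ splits as a $G^{\red}$-module into $\nn = \nn_0 \oplus \nn_1$, where $\nn_0$ is the isotypic component of trivial submodules (equivalently, the centralizer of $\rr$ in $\nn$).

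The easy brackets come first. Since $\rr$ is a Lie subalgebra, $[\rr,\rr]\subseteq\rr$. Since $\nn_1$ is $G^{\red}$-stable by construction, the subalgebra $\sss$ it generates is also $G^{\red}$-stable, hence $\ad(\rr)$-stable, so $[\rr,\sss]\subseteq\sss$. This settles $[\rr,\rr\oplus\sss]\subseteq\rr\oplus\sss$. For the other direction, $[\nn,\rr]\subseteq\nn$ because $\nn$ is an ideal of $\gg$, but in fact $[\nn_0,\rr]=0$ by definition of $\nn_0$, and $[\nn_1,\rr]\subseteq\nn_1\subseteq\sss$ by $G^{\red}$-invariance of $\nn_1$, so $[\nn,\rr]\subseteq\sss$.

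The main obstacle is showing $[\nn,\sss]\subseteq\sss$, i.e., that $\sss$ is already an ideal of $\nn$. The clean bracket $[\nn_1,\sss]\subseteq\sss$ is immediate since $\sss$ is a subalgebra containing $\nn_1$, so the real work is $[\nn_0,\sss]\subseteq\sss$. The key input is the inclusion $[\nn_0,\nn_1]\subseteq\nn_1$, obtained from equivariance: the bracket $\nn\otimes\nn\to\nn$ is $G^{\red}$-equivariant, and since $G^{\red}$ acts trivially on $\nn_0$, the module $\nn_0\otimes\nn_1$ is a direct sum of copies of $\nn_1$, hence contains no trivial simple $G^{\red}$-submodule; its image in $\nn$ is therefore forced into $\nn_1$.

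From here I would finish by induction on bracket length. Every element of $\sss$ is a linear combination of iterated commutators $[x_1,[x_2,\ldots,[x_{k-1},x_k]\ldots]]$ with $x_i\in\nn_1$, and for $y\in\nn_0$ the Jacobi identity gives
$$
[y,[x_1,w]] = [[y,x_1],w] + [x_1,[y,w]],
$$
where $[y,x_1]\in\nn_1\subseteq\sss$ makes the first summand a bracket of two elements of $\sss$, and $[y,w]\in\sss$ by the inductive hypothesis makes the second summand lie in $[\nn_1,\sss]\subseteq\sss$. Thus $[\nn_0,\sss]\subseteq\sss$, which together with $[\nn_1,\sss]\subseteq\sss$ gives $[\nn,\sss]\subseteq\sss$. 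Combining everything yields $[\gg,\rr\oplus\sss]\subseteq\rr\oplus\sss$, so $\rr\oplus\sss$ is an ideal of $\gg$.
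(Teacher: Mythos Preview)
Your proof is correct and follows essentially the same route as the paper. Both reduce to showing $[\nn,\sss]\subseteq\sss$, establish the key inclusion $[\nn_0,\nn_1]\subseteq\nn_1$ via $G^{\red}$-equivariance of the bracket, and then pass from $\nn_1$ to the subalgebra $\sss$ it generates; the paper compresses this last step into the line ``since $\sss$ is generated by $\nn_1$, it suffices to show $[V,\nn_1]\subseteq\nn_1$'', while you spell out the Jacobi induction explicitly.
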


\begin{proof}
It suffices to check that for any simple $G^{\red}$-submodule $V$ in $\nn$ the subspace $[V,\rr\oplus\sss]$ is contained in $\sss$. 
We have 
$$
[V,\rr\oplus\sss]=[V,\rr]+[V,\sss]\subseteq \nn_1+[V,\sss], 
$$
so it remains to prove that $[V,\sss]\subseteq\sss$. 

\smallskip
If $V$ is non-trivial, then $V\subseteq\nn_1$, and the conclusion follows from the definition of $\sss$. Assume that $V$ is one-dimensional trivial. 
Since the Lie algebra $\sss$ is generated by $\nn_1$, it suffices to show that $[V,\nn_1]\subseteq\nn_1$. 

\smallskip
Take a non-trivial simple $G^{\red}$-submodule $W$ in $\nn_1$. It suffices to prove that $[V,W]\subseteq\nn_1$. The map
$$
V\otimes W\to\nn, \quad \sum_i v_i\otimes w_i\mapsto \sum_i [v_i,w_i]
$$
is $G^{\red}$-equivariant. Since $V$ is one-dimensional trivial, the module $V\otimes W$ is isomorphic to~$W$. Take a nonzero vector $v\in V$. 
The map $W\to [V,W]$, $w\mapsto [v,w]$ is surjective and $G^{\red}$-equivariant. So either it is an isomorphism, and then $[V,W]\subseteq\nn_1$ by the definition of~$\nn_1$, or
we have $[V,W]=0$. This concludes the proof of Lemma~\ref{l4}.
\end{proof}

\begin{lemma} \label{l5}
The ideal $\rr\oplus\sss$ is the minimal ideal in $\gg$ containing $\rr$.
\end{lemma}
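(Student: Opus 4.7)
The plan is to show that if $\mathfrak{I}$ is any ideal of $\gg$ containing $\rr$, then $\mathfrak{I}$ automatically contains $\rr\oplus\sss$. Combined with Lemma~\ref{l4}, which already gives that $\rr\oplus\sss$ is an ideal, this yields the minimality assertion.

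First I would observe that since $\gg=\rr\oplus\nn$ as a vector space and $\rr\subseteq\mathfrak{I}$, every element of $\mathfrak{I}$ splits as $r+n$ with $r\in\rr$ and $n\in\nn\cap\mathfrak{I}$. Thus $\mathfrak{I}=\rr\oplus\mathfrak{m}$, where $\mathfrak{m}:=\mathfrak{I}\cap\nn$ is a Lie subalgebra of $\nn$. Since $\rr\subseteq\mathfrak{I}$ and $\mathfrak{I}$ is an ideal, $\mathfrak{m}$ is $\ad(\rr)$-stable, and because $G^{\red}$ is connected it is $G^{\red}$-stable.

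Next I would exploit the ideal condition on the cross-bracket $[\nn,\rr]$: since $\rr\subseteq\mathfrak{I}$, we have $[\nn,\rr]\subseteq\mathfrak{I}$, and since $\nn$ is an ideal of $\gg$, also $[\nn,\rr]\subseteq\nn$; therefore $[\nn,\rr]\subseteq\mathfrak{m}$. Decomposing the $G^{\red}$-module $\nn$ as $\nn_0\oplus\nn_1$, where $\nn_0$ is the sum of the trivial simple submodules, the term $[\nn_0,\rr]$ vanishes, while for each non-trivial simple submodule $V\subseteq\nn_1$ the subspace $[V,\rr]$ is $G^{\red}$-invariant in $V$ and non-zero (otherwise $V$ would be a trivial module), hence equals $V$ by simplicity. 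This gives $[\nn,\rr]=\nn_1$, and therefore $\nn_1\subseteq\mathfrak{m}$.

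Since $\mathfrak{m}$ is a Lie subalgebra of $\nn$ containing the generating subspace $\nn_1$ of $\sss$, we conclude $\sss\subseteq\mathfrak{m}$, hence $\rr\oplus\sss\subseteq\mathfrak{I}$, as required. The only step with any content is the identification $[\nn,\rr]=\nn_1$, and I expect that to be the main obstacle only in the sense that one must be careful to use both the $G^{\red}$-equivariance (to ensure $[V,\rr]$ is a submodule) and the definition of $\nn_1$ as the sum of the non-trivial simple submodules.
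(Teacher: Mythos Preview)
Your argument is correct and follows the same line as the paper's: show that any ideal of $\gg$ containing $\rr$ must contain $\nn_1$ (hence the subalgebra $\sss$ it generates), and then invoke Lemma~\ref{l4}. The paper simply cites Lemmas~\ref{l2} and~\ref{l3} for the first step, whereas you carry it out directly via the identity $[\nn,\rr]=\nn_1$ obtained from the $G^{\red}$-module decomposition, which is precisely the Lie-algebraic content underlying those two lemmas.
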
 

\begin{proof}
If some ideal of $\gg$ contains $\rr$, then by Lemmas~\ref{l2} and~\ref{l3} it contains the subalgebra $\sss$. On the other hand, Lemma~\ref{l4} claims that $\rr\oplus\sss$ is an ideal.
\end{proof} 

Since any $\Gm$-subgroup is contained in a subgroup conjugated to $G^{\red}$, we conclude from Lemma~\ref{lemred} that $G^{\mult}$ is the subgroup generated by all subgroups conjugated to $G^{\red}$. In other words, $G^{\mult}$ is the minimal normal subgroup of $G$ containing $G^{\red}$. By~\cite[Theorem~13.3]{Hum} this is equivalent to the fact that
$\Lie(G^{\mult})$ is the minimal ideal of $\gg$ containing $\rr$. Lemma~\ref{l5} implies $\Lie(G^{\mult})=\rr\oplus\sss$ and so 
$G^{\mult}=G^{\red}\ltimes\exp(\sss)$. 

\smallskip

This completes the proof of Theorem~\ref{tmain}.
\end{proof}

\begin{proof}[Proof of Corollary~\ref{cor1}.]
As $G$ is a regular subgroup, the nilpotent ideal $\nn=\Lie(\Ru(G))$ is a linear span of some root vectors in the reductive Lie algebra $\Lie(F)$. Since all roots are nonzero $T$-weights, Lemma~\ref{l2} implies $\nn=\nn_1=\sss$, so $G=G^{\mult}$. 
\end{proof}

\section{Concluding remarks and problems}
\label{sec4}

It is shown in~\cite{Chi-1, Chi-2} that for any nonzero nilpotent element $x$ in the Lie algebra $\gg=\sl_n(\KK)$ or $\sp_{2n}(\KK)$ there is a nilpotent $y$ such that $x$ and $y$ 
generate~$\gg$. This result implies that for any $\Ga$-subgroup $H_1$ in the group $G=\SL_n(\KK)$ or $\Sp_{2n}(\KK)$ there is a $\Ga$-subgroup $H_2$ such that
$H_1$ and $H_2$ generate the group $G$. In particular, the groups $\SL_n(\KK)$ and $\Sp_{2n}(\KK)$ are generated by two $\Ga$-subgroups. 

At the same time, any $\Ga$-subgroups $H_1,\ldots,H_d$ in the commutative unipotent group $\Ga^n$ define a homomorphism 
$$
H_1\times\ldots\times H_d \to \Ga^n, \quad (h_1,\ldots,h_d) \mapsto h_1\ldots h_d,
$$
whose image is of dimension at most $d$. So the group $\Ga^n$ is generated by at least $n$ one-parameter subgroups. 

\begin{problem}
Given an additively generated linear algebraic group $G$, what is the minimal number $k$ such that some $\Ga$-subgroups $H_1,\ldots,H_k$ generate $G$? 
\end{problem}

\begin{problem}
Given a multiplicatively generated linear algebraic group $G$, what is the minimal number $s$ such that some $\Gm$-subgroups $M_1,\ldots,M_s$ generate $G$? 
\end{problem}

Lemma~\ref{lemred} motivates the following two problems. 

\begin{problem} \label{qq}
Given an additively generated linear algebraic group $G$, what is the minimal number $l$ such that any element of $G$ is a product of at most $l$ unipotent elements? 
\end{problem}

\begin{problem}
Given a multiplicatively generated linear algebraic group $G$, what is the minimal number $m$ such that any element of $G$ is a product of at most $m$ semisimple elements? 
\end{problem}

When this note appeared as a preprint, Alexey Galt sent us the following solution to Problem~\ref{qq}. We assume that $G$ is not unipotent, so the subgroup $G^{\ses}$ is non-trivial.

\begin{proposition}
If the subgroup $G^{\ses}$ is adjoint, then $l=2$; otherwise $l=3$. 
\end{proposition}

\begin{proof}
Note that a product of two commuting unipotent elements is unipotent. Also if $(u,w)\in G^{\ses}\ltimes\Ru(G)$ and $u$ is unipotent, then $(u,w)$ is unipotent as well. 

It is known that in a semisimple group any non-central element is a product of two unipotent elements; see~\cite[Corollary of Theorem~3]{EG} for this result
in the context of Chevalley groups. So if $G^{\ses}$ is adjoint then any $g\in G^{\ses}$ equals $u_1u_2$ for some unipotent elements $u_1,u_2\in G^{\ses}$, and any $(g,u)\in G^{\ses}\ltimes\Ru(G)$ equals $(u_1,e)(u_2,u)$. We conclude that $l=2$. 

If $G^{\ses}$ is not adjoint, $z\in G^{\ses}$ is a non-unit central element, and $z=u_1u_2$, then $u_1u_2=z=u_1^{-1}zu_1=u_2u_1$. This shows that $z$ is unipotent, a contradiction. So $\l\ge 3$. At the same time, for a non-unit unipotent $v\in G^{\ses}$ the element $zv$ is not central, so $zv=u_1u_2$ and
$z=u_1u_2v^{-1}$. This proves that any $(z,u)\in G^{\ses}\ltimes\Ru(G)$ equals $(u_1,e)(u_2,e)(v^{-1},u)$, and so $l=3$. 
\end{proof} 

The final theme is about surjective morphisms from affine spaces. We say that an algebraic variety $X$ is an \emph{A-image} if there is a surjective morphism $\varphi\colon\AA^d\to X$
for some positive integer $d$. In this case we have $\KK[X]^{\times}=\KK^{\times}$ and the variety $X$ is irreducible and unirational. The last condition means that the field of rational functions $\KK(X)$ can be embedded into the field of fractions $\KK(x_1,\ldots,x_d)$ of the polynomial algebra $\KK[x_1,\ldots,x_d]$. 

One may ask whether any unirational variety $X$ without non-constant invertible functions is an $A$-image. This holds when $X$ is complete~\cite[Theorem~1.7]{AKaZ} and when $X$ is an affine cone~\cite[Theorem~1]{Ar-1}. 

Let $G$ be an additively generated linear algebraic group. By~\cite[Proposition~1.5]{AFKKZ}, there is a sequence $H_1,\ldots, H_d$ of $\Ga$-subgroups in $G$ such that the morphism
$$
H_1\times\ldots\times H_d\to G, \quad (h_1,\ldots,h_d)\mapsto h_1\ldots h_d
$$
is surjective. This shows that any additively generated linear algebraic group is an A-image. Moreover, this allows to prove that a homogeneous space $G/H$ of a connected linear
algebraic group $G$ is an A-image if and only if $\KK[G/H]^{\times}=\KK^{\times}$; see~\cite[Theorem~C]{Ar-2}.

\smallskip 

Denote by $\AA^1_*$ the punctured affine line $\AA^1\setminus\{0\}$. Let us say that an algebraic variety $X$ is an \emph{M-image} if there is a surjective morphism $\psi\colon(\AA^1_*)^d\to X$ for some positive integer $d$. In this case the variety $X$ is again irreducible and unirational, but we have no restriction on invertible functions. 

Since there is a surjective morphism
$$
\AA^1_*\to\AA^1, \quad x\mapsto x+\frac{1}{x},
$$
any $A$-image is an $M$-image. One may ask whether any unirational algebraic variety is an $M$-image. 

Let us return to algebraic groups. Since any connected linear algebraic group $G$ is generated by $\Ga$- and $\Gm$-subgroups, again by~\cite[Proposition~1.5]{AFKKZ} there is a sequence
$S_1,\ldots, S_d$ of connected one-parameter subgroups such that the morphism
$$
S_1\times\ldots\times S_d\to G, \quad (s_1,\ldots,s_d)\mapsto s_1\ldots s_d
$$
is surjective. This shows that any homogeneous space $G/H$ is an $M$-image. 

\bigskip

\emph{Acknowledgements.} \ The idea of this note appeared during the conference Vavilov Memorial 2024 in Saint Petersburg. The author is grateful to Eugene Plotkin for stimulating questions and discussions. 

Specials thanks are due to the reviewers for many useful comments and suggestions. 


\end{document}